\DeclareMathAlphabet{\mbf}{OT1}{ptm}{b}{n}
\newcommand{\mbs}[1]{{\boldsymbol{#1}}}
\newcommand{\trans}{{\ensuremath{\mathsf{T}}}} 
\newcommand{\norm}[1]{\left\Vert#1\right\Vert} 
\newcommand{\bbm}{\begin{bmatrix}}
\newcommand{\ebm}{\end{bmatrix}}
\newcommand{\bma}[1]{\left[\begin{array}{#1}}
\newcommand{\ema}{\end{array}\right]}
\newcommand{\beq}{\begin{equation}}
\newcommand{\eeq}{\end{equation}}
\newcommand{\bdis}{\begin{displaymath}}
\newcommand{\edis}{\end{displaymath}}
\newcommand{\beqarray}{\begin{eqnarray}}
\newcommand{\eeqarray}{\end{eqnarray}}
\newcommand{\beqarraynn}{\begin{eqnarray*}}
\newcommand{\eeqarraynn}{\end{eqnarray*}}
\newtheorem{theorem}{Theorem}[section]
\newtheorem{lemma}{Lemma}[section]
\newtheorem{assumption}[theorem]{Assumption}
\newtheorem{remark}[theorem]{Remark}
\newcommand{\bone}{\mbf{1}}
\DeclareMathOperator*{\argmin}{arg\,min}
\title{Robust Local Stabilization of Nonlinear Systems with Controller-Dependent Norm Bounds: A Convex Approach with Input-Output Sampling}
\author{Sze Kwan Cheah, Diganta Bhattacharjee, Maziar S. Hemati, and Ryan J. Caverly 
\thanks{This work is supported by the Office of the Under Secretary of Defense for Research and Engineering under award number FA9550-21-1-0213. DB and MSH acknowledge partial support from the Air Force Office of Scientific Research under award numbers FA9550-21-1-0106 and FA9550-22-1-0004, the Army Research Office under award number W911NF-20-1-0156, and the National Science Foundation under award number CBET-1943988. 
\textit{(SKC and DB are co-first authors.)}}
\thanks{The authors are with the Department of Aerospace Engineering \& Mechanics, University of Minnesota, Minneapolis, MN 55455, USA. Email: \{cheah013, dbhattac, mhemati, rcaverly\}@umn.edu
}
}
\begin{document}

\maketitle

\begin{abstract}
This letter presents a framework for synthesizing a robust full-state feedback controller for systems with unknown nonlinearities. Our approach characterizes input-output behavior of the nonlinearities in terms of local norm bounds using available sampled data corresponding to a known region about an equilibrium point.
A challenge in this approach is that if the nonlinearities have explicit dependence on the control inputs, an \emph{a priori} selection of the control input sampling region is required to determine the local norm bounds. This leads to a ``chicken and egg'' problem, where the local norm bounds are required for controller synthesis, but the region of control inputs needed to be characterized cannot be known prior to synthesis of the controller.
To tackle this issue, we constrain the closed-loop control inputs within the sampling region while synthesizing the controller.
As the resulting synthesis problem is non-convex, three semi-definite programs (SDPs) are obtained through convex relaxations of the main problem, and an iterative algorithm is constructed using these SDPs for control synthesis. Two numerical examples are included to demonstrate the effectiveness of the proposed algorithm.
\end{abstract}

\section{Introduction}
A large portion of the existing literature on nonlinear control comprises model-based approaches (e.g., backstepping, feedback linearization \cite{Khalil}) that depend on the availability of a sufficiently accurate analytical model of the system or plant to be controlled.
There is an inherent assumption that such an analytic model can be obtained from the fundamental laws governing the system. 
However, there are many instances where a reliable and/or accurate analytical model of the system cannot be obtained in practice. 
Robust control theory can also be used to design a stabilizing controller for a nonlinear system under the assumption that at least a nominal realization of the system, either linear time-invariant (LTI) or linear time-varying (LTV), can be obtained (see, e.g., 
\cite[Chapter 9]{buch2021finite, goh1996robust, Dullerud_Paganini_2000}).
In this setting, the system is described in 
linear fractional transformation (LFT) form, where the nominal system is connected in feedback with a `perturbation' that captures all aspects of the system that do not fit within an LTI or LTV framework (e.g., nonlinearity, uncertainty, time delays).
The control synthesis is then based on the input-output (I/O) properties of the perturbation, which are assumed to hold for some known structure and/or set of 
perturbations (e.g., integral quadratic constraints \cite{megretski1997system}, structured singular value \cite{Packard_Doyle_1993mu}). 
For a complex system, however, it is not trivial to quantify these I/O properties and ascertain if the accompanying assumptions hold, especially when the analytical form of the perturbation is not explicitly known.

Data-driven techniques are now becoming increasingly popular to overcome these issues\cite{hou2013model}. For example, strategies are being proposed for determining I/O properties from sampled data (like dissipation inequalities \cite{Romer_LCSS_2019} and passivity~
\cite{tanemura2018efficient}).
Also, data-driven approaches for robust control, with a focus on perturbations representing parametric uncertainties and unknown nonlinearity driven by states and/or parameters, are gaining popularity \cite{berberich2020robust, na2020output, Wang_et_al_2014}. 
However, the class of perturbations for which the unknown nonlinearity is a function of both the states and control inputs is relatively less studied.
Existing data-driven methods for this class of perturbations are either guaranteed to work in a small neighborhood of the equilibrium \cite{dePersis2020} or require extensive tuning of the associated control parameters to satisfy the underlying assumptions \cite{TANASKOVIC2017}. 
In light of these challenges associated with purely data-driven control, we have adopted an approach where I/O properties of the perturbation are established through data and subsequently used for control synthesis.

We consider a general nonlinear system and partition it into an LFT form, where the nominal system captures the system's LTI dynamics about an equilibrium and the perturbation comprises higher-order nonlinearities, which are functions of both the states and control inputs and are not necessarily known analytically. 
Assuming I/O samples of the perturbation are available, local norm bounds are derived. %
A robust state-feedback controller is synthesized that asymptotically stabilizes 
the nonlinear system locally within the sampled region for all perturbations satisfying the norm bounds.
Since the norm bounds, which depend on control inputs, are used in the synthesis, the synthesized controller must ensure that the control inputs remain within the sampling set. Moreover, the main synthesis problem involves non-convex constraints. We relax the main problem into three semi-definite programs (SDPs) that are solved iteratively.
To summarize, the 
contribution of this work is an iterative control synthesis method that results in local asymptotic stabilization of a nonlinear system, where the system's nonlinearities are analytically unknown, but available through sampling, and depend explicitly on 
control inputs. 

\textit{Notation:}
The symbol $\mathcal{N}_n$ is a shorthand for the set $\{ 1,2,\dots,n \}$, and $\norm{\cdot}$ denotes the 2-norm for vectors and spectral norm for matrices.
An $n$-dimensional vector of zeros with the $i$-th entry equal to one is denoted by $\bone_{n_{i}}$. We use $\mbf{A}>0$ to denote a symmetric, positive definite matrix $\mbf{A}$.
The maximum singular value of $\mbf{M}$ is denoted by $\bar{\sigma}(\mbf{M})$.  
For a given $\mbf{E} > 0$, an ellipsoid centered at the origin is denoted by $\mathcal{E}_n (\mbf{E}) = \{ \mbf{x} \in \mathbb{R}^n |  \norm{\mbf{E}^{-1} \mbf{x}} \leq 1  \} $.
Finally, $\mathcal{B}_n$ denotes the closed unit-norm ball in $\mathbb{R}^n$.


\section{Problem Formulation}

Consider a nonlinear dynamic system of the form
\begin{equation} \label{eq:original nonlinear system}
\dot{\mbf{x}} = \mbf{f}(\mbf{x}, \mbf{u}),
\end{equation}
where $\mbf{x} \in \mathbb{R}^{n_x}$ and $\mbf{u} \in \mathbb{R}^{n_u}$ are the state and control input vectors, respectively, and $\mbf{f}: \mathbb{R}^{n_x} \times \mathbb{R}^{n_u} \rightarrow \mathbb{R}^{n_x}$ is a nonlinear function subject to the following assumption.
\begin{assumption} \label{Assumptions on the original nonlinearity}
$\mbf{f}$ is not precisely known. However, there exists at least one equilibrium $(\mbf{x}_0, \mbf{u}_0)$ such that $\mbf{f}(\mbf{x}_0, \mbf{u}_0) = \mbf{0}$, and $\mbf{A} = \frac{\partial \mbf{f}}{\partial \mbf{x}} |_{(\mbf{x}_0, \mbf{u}_0)}$, $\mbf{B}_1 = \frac{\partial \mbf{f}}{\partial \mbf{u}} |_{(\mbf{x}_0, \mbf{u}_0)}$ are known.
\end{assumption}

All the developments in this letter are based on the above assumption. 
If multiple 
equilibria are known, we choose the one that is relevant for the problem at hand. 
Next, by setting $\mbf{x} = \mbf{x}_0 + \delta \mbf{x}$, $\mbf{u} = \mbf{u}_0 + \delta \mbf{u}$, we can rewrite \eqref{eq:original nonlinear system} as
\begin{equation} \label{eq:LTI system plus uncertainty}
\delta \dot{\mbf{x}} = \mbf{A} \delta \mbf{x} + \mbf{B}_1 \delta \mbf{u} + \mbs{\Delta}(\delta \mbf{x}, \delta \mbf{u}),
\end{equation}
where $\mbf{A}, \mbf{B}_1$ are as defined in Assumption \ref{Assumptions on the original nonlinearity}, and
$\mbs{\Delta} : \mathbb{R}^{n_x} \times \mathbb{R}^{n_u} \rightarrow \mathbb{R}^{n_x}$ is a function that captures the higher-order terms. Note that $\mbs{\Delta}$, 
whose analytical form is not available, is a function of both $\delta \mbf{x}, \delta \mbf{u}$. Also, we assume that $\mbs{\Delta}$ is memoryless and static.
Thus, system \eqref{eq:LTI system plus uncertainty} can be expressed in an LFT form having the nominal LTI dynamics $\delta \dot{\mbf{x}} = \mbf{A} \delta \mbf{x} + \mbf{B}_1 \delta \mbf{u} $ and the perturbation $\mbs{\Delta}$.

Although $\mbs{\Delta}$ is not known analytically, we assume that a finite number of input-output samples of $\mbs{\Delta}$ are available from either experiments or high-fidelity numerical simulations. 
We also assume that the sampling is carried out in a known region around the equilibrium, such as over a $N$-point grid where $N$ is large.  
We refer to this region as the sampling region $\mathcal{S} = \mathbb{X} \times \mathbb{U},$ where $\mathbb{X} \subset \mathbb{R}^{n_x}$ and $\mathbb{U} \subset \mathbb{R}^{n_u}$ are known, compact sets that contain the respective origins in the interiors.
Therefore, we have access to $N$ input-output samples $\{ \left(\delta \mbf{x}^{(k)}, \delta \mbf{u}^{(k)} \right),  \mbs{\Delta}\left(\delta \mbf{x}^{(k)}, \delta \mbf{u}^{(k)} \right) \}_{k \in \mathcal{N}_N}$,
where $\left(\delta \mbf{x}^{(k)}, \delta \mbf{u}^{(k)} \right) \in \mathcal{S}$ for all $k \in \mathcal{N}_N$.
Upon investigating these samples, we can deduce the following: 
\begin{itemize}
\item If the vector $\mbs{\Delta}\left(\delta \mbf{x}^{(k)}, \delta \mbf{u}^{(k)} \right)$ corresponding to $\delta \mbf{x}^{(k)} \neq \mbf{0}$ and $\delta \mbf{u}^{(k)} \neq \mbf{0}$ contains elements that are identically 0, then the original system \eqref{eq:original nonlinear system} has states that are governed by purely LTI dynamics. This, in conjunction with the nonzero entries (say, $n_w$ of those), can be used to find a realization of $\mbs{\Delta}\left(\delta \mbf{x}^{(k)}, \delta \mbf{u}^{(k)} \right)$ of the form
$\mbs{\Delta}\left(\delta \mbf{x}^{(k)}, \delta \mbf{u}^{(k)} \right) 
= \bbm
\Delta_1 & \cdots & 0 & \Delta_{n_{w}} & \cdots
\ebm^\trans,$
where $\Delta_i: \mathbb{R}^{n_x} \times \mathbb{R}^{n_u} \rightarrow \mathbb{R}, \ i \in \mathcal{N}_{n_w}$ are the nonlinearties corresponding to the nonzero entries. Thus, the above can be reformulated as $\mbs{\Delta}\left(\delta \mbf{x}^{(k)}, \delta \mbf{u}^{(k)} \right) = \mbf{B}_2 \mbf{w}^{(k)}$ where $\mbf{w}^{(k)} = \bbm w_1^{(k)} & w_2^{(k)} & \ldots & w_{n_{w}}^{(k)} \ebm^\trans \in \mathbb{R}^{n_w}$ contains outputs of all the nonlinearities and $\mbf{B}_2 \in \mathbb{R}^{n_x \times n_w}$ properly distributes the elements of $\mbf{w}^{(k)}$.
\item  We can also identify the individual states and control inputs that drive each nonlinear function $\Delta_i$. Based on this, the sampled input to $\Delta_i$ takes the form 
$\mbf{v}_i^{(k)} = \left[
\delta x_1^{(k)} \ \cdots \ 0 \ \cdots \ \delta u_1^{(k)}  \ \cdots \ 0 \ \cdots \right]^\trans \in \mathbb{R}^{n_x + n_u}$
which is equivalent to $\mbf{v}_i^{(k)} = \mbf{C}_{i}  \delta \mbf{x}^{(k)} + \mbf{D}_{i} \delta \mbf{u}^{(k)}$ with $\mbf{C}_{i} \in \mathbb{R}^{(n_x+n_u) \times n_x}, \ \mbf{D}_{i} \in \mathbb{R}^{(n_x+n_u) \times n_u}$ known. Therefore, for each $k \in \mathcal{N}_N$, we have
$w_i^{(k)} = \Delta_i (\mbf{v}_i^{(k)}), \ i \in \mathcal{N}_{n_w}$. 
Although expressed in this form, it is understood that each 
$\Delta_i$ maps $\mathbb{R}^{n_x} \times \mathbb{R}^{n_u}$ to $\mathbb{R}$.
\item The samples can be used to prescribe \textit{empirical norm bounds} on the inputs-outputs of each 
$\Delta_i$. Specifically, we intend to find $\gamma_i > 0$ for each $i \in \mathcal{N}_{n_w}$ such that $\left( w_i^{(k)} \right)^2 \leq \gamma_i^2 \norm{\mbf{v}_i^{(k)}}^2$
holds for all $k \in \mathcal{N}_N$.
To this end, for each pair $(i,k) \in \mathcal{N}_{n_w} \times \mathcal{N}_N$,
we define $\gamma_i^{(k)} = \left(\left( w_i^{(k)} \right)^2  \big/ \norm{\mbf{v}_i^{(k)}}^2 \right)^{1/2}$ and stack all such bounds in a matrix 
$\Gamma = \begin{bmatrix}
\Gamma_1 & \Gamma_2 & \cdots & \Gamma_{n_{w}}
\end{bmatrix},$
where $\Gamma_i = \begin{bmatrix}
\gamma^{(1)}_i & \gamma^{(2)}_i & \dots & \gamma^{(N)}_i
\end{bmatrix}^\trans$.
The empirical bounds are then specified as the maximum over each column, 
i.e., $\gamma_i = \max \Gamma_{i}$. If we seek the bounds over a region $\mathbb{D} \subseteq \mathcal{S}$, the same procedure can be repeated with the sampled data corresponding to $\mathbb{D}$.
\end{itemize}
Using the above information, system \eqref{eq:LTI system plus uncertainty} can be rewritten as 
\begin{align}
    \delta \dot{\mbf{x}} 
    &= \mbf{A} \delta \mbf{x} + \mbf{B}_1 \delta \mbf{u} + \mbf{B}_2 \mbf{w}, \nonumber
    \\
    w_i &= \Delta_i (\mbf{v}_i), \ i \in \mathcal{N}_{n_w}, \label{eq:open-loop system}
    \\
    \mbf{v}_i &= \left( \mbf{C}_{i}  \delta \mbf{x} + \mbf{D}_{i} \delta \mbf{u} \right), \ i \in \mathcal{N}_{n_w}, \nonumber
\end{align}
with the following standing assumption for a given $\mathbb{D} \subseteq \mathcal{S}$. 
\begin{assumption} \label{Assumption: Delta_i satisfies empirical bounds}
Let $\gamma_i, \ i \in \mathcal{N}_{n_w}$ be the empirical bounds corresponding to $\mathbb{D}$. Each input-output tuple $\left( \mbf{v}_i, w_i \right)$ satisfies the bound $\gamma_i$ in $\mathbb{D}$, i.e., $w_i^2 \leq \gamma_i^2 \norm{\mbf{v}_i}^2$, for all $\left( \delta \mbf{x}, \delta \mbf{u} \right) \in \mathbb{D}$ and for all $i \in \mathcal{N}_{n_{w}}$. Also, for each $i \in \mathcal{N}_{n_{w}}$, let $\mbs{\Delta}_{\mathbb{D}_{i}}$ be the set of functions $\Delta_i: \mathbb{D} \rightarrow \mathbb{R}$ for which the bound $\gamma_i$ holds.
\end{assumption}

The above assumption is reasonable since our knowledge is restricted to the extent provided by the sampled data and analytical forms of $\Delta_i$s are unknown. 
%
%
Now, we are interested in designing a state-feedback control law $\delta \mbf{u} = \mbf{K} \delta \mbf{x}$ for the open-loop system \eqref{eq:open-loop system}, under Assumption \ref{Assumption: Delta_i satisfies empirical bounds} with $\mathbb{D} = \mathbb{X}_c \times \mathbb{U}_c$ for some $\mathbb{X}_c \subseteq \mathbb{X}$, $\mathbb{U}_c \subseteq \mathbb{U}$ containing the respective origins in the interiors.
The closed-loop system thus becomes 
\begin{align}
    \delta \dot{\mbf{x}} 
        &= \left( \mbf{A}  + \mbf{B}_1 \mbf{K} \right) \delta \mbf{x} + \mbf{B}_2 \mbf{w}, \nonumber
    \\ 
    w_i &= \Delta_i (\mbf{v}_i) , \ i \in \mathcal{N}_{n_w},
    \label{eq:closed-loop system}
    \\ 
    \mbf{v}_i &= \left( \mbf{C}_{i} + \mbf{D}_{i} \mbf{K} \right) \delta \mbf{x}, \ i \in \mathcal{N}_{n_w} \nonumber.
\end{align}
The goal is to synthesize 
$\mbf{K}$ to certify the closed-loop system \eqref{eq:closed-loop system} asymptotically stable in the largest local region 
$\mathbb{X}_c \subseteq \mathbb{X}$, for a choice of $\mathbb{U}_c \subseteq \mathbb{U}$ and the corresponding bounds $\gamma_i$ and set of functions $\mbs{\Delta}_{\mathbb{D}_{i}}$, $i \in \mathcal{N}_{n_w}$.
However, as the synthesis uses $\gamma_i$, we need to verify whether the closed-loop control trajectories satisfy $\delta \mbf{u} = \mbf{K} \delta \mbf{x} \in \mathbb{U}_c$ for $\mbf{K}$ to be 
consistent with the data. 
This requires the use of an iterative approach, which is described in detail in the synthesis presented in the next section.
Before discussing the control synthesis, we outline a few useful matrix inequality results.
\begin{lemma}
\label{lemma:young}
(Young's Relation \cite{CaverlyLMI}): Consider $\mbf{X} \in \mathbb{R}^{m \times n}$ and $\mbf{Y} \in \mathbb{R}^{m \times n}$. For any $\mbf{S} >0$, it holds that 
\begin{align}
    \label{eq:young}
    \mbf{X}^\trans \mbf{Y} + \mbf{Y}^\trans \mbf{X}
    \leq 
    \mbf{X}^\trans \mbf{S}^{-1} \mbf{X} +
    \mbf{Y}^\trans \mbf{S} \mbf{Y}.
\end{align}
\end{lemma}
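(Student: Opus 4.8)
The plan is to prove this by completing the square with respect to the positive definite weight $\mbf{S}$. Since the expressions $\mbf{X}^\trans \mbf{S}^{-1} \mbf{X}$ and $\mbf{Y}^\trans \mbf{S} \mbf{Y}$ must be conformable, $\mbf{S}$ is an $m \times m$ matrix, and because $\mbf{S} > 0$ it admits a unique symmetric positive definite square root $\mbf{S}^{1/2}$ satisfying $\mbf{S}^{1/2} \mbf{S}^{1/2} = \mbf{S}$, with inverse $\mbf{S}^{-1/2}$ that is likewise symmetric and positive definite. This is the one structural fact I need to invoke; everything else reduces to algebra.

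First I would form the matrix $\mbf{M} = \mbf{S}^{-1/2} \mbf{X} - \mbf{S}^{1/2} \mbf{Y}$ and observe that $\mbf{M}^\trans \mbf{M} \geq 0$, since for any vector $\mbf{z}$ we have $\mbf{z}^\trans \mbf{M}^\trans \mbf{M} \mbf{z} = \norm{\mbf{M} \mbf{z}}^2 \geq 0$. Next I would expand $\mbf{M}^\trans \mbf{M}$, using the symmetry of $\mbf{S}^{1/2}$ and $\mbf{S}^{-1/2}$ together with the cancellations $\mbf{S}^{-1/2} \mbf{S}^{-1/2} = \mbf{S}^{-1}$, $\mbf{S}^{1/2} \mbf{S}^{1/2} = \mbf{S}$, and $\mbf{S}^{1/2} \mbf{S}^{-1/2} = \mbf{I}$. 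The two cross terms then collapse to $-\mbf{X}^\trans \mbf{Y} - \mbf{Y}^\trans \mbf{X}$, while the two diagonal terms give $\mbf{X}^\trans \mbf{S}^{-1} \mbf{X} + \mbf{Y}^\trans \mbf{S} \mbf{Y}$. Rearranging the resulting inequality $\mbf{M}^\trans \mbf{M} \geq 0$ yields the claimed bound directly.

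There is no substantial obstacle here; the only thing to get right is the pairing of the square-root factors, i.e., which of $\mbf{S}^{1/2}$, $\mbf{S}^{-1/2}$ multiplies $\mbf{X}$ versus $\mbf{Y}$. The pairing of $\mbf{S}^{-1/2} \mbf{X}$ with $\mbf{S}^{1/2} \mbf{Y}$ is the correct one, since it makes the cross terms reduce through $\mbf{S}^{-1/2} \mbf{S}^{1/2} = \mbf{I}$ and leaves $\mbf{X}^\trans \mbf{Y} + \mbf{Y}^\trans \mbf{X}$ unweighted; the reversed pairing would leave a spurious factor of $\mbf{S}$ in the cross terms and fail to produce the stated form. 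As a closing remark, this argument also shows the bound is tight, with equality precisely when $\mbf{M} = \mbf{0}$, that is, when $\mbf{Y} = \mbf{S}^{-1} \mbf{X}$.
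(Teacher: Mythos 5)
Your proof is correct. Note, however, that the paper itself contains no proof of this lemma at all---it is stated as a known result with a citation to \cite{CaverlyLMI}---so your argument supplies what the paper delegates to the reference, and it is the standard argument used there. All steps check out: for $\mbf{S}>0$ of size $m\times m$, the symmetric positive definite square root $\mbf{S}^{1/2}$ and its inverse $\mbf{S}^{-1/2}$ exist; the matrix $\mbf{M}=\mbf{S}^{-1/2}\mbf{X}-\mbf{S}^{1/2}\mbf{Y}$ satisfies $\mbf{M}^\trans\mbf{M}\geq 0$; and expanding with the cancellations you list gives $\mbf{X}^\trans\mbf{S}^{-1}\mbf{X}-\mbf{X}^\trans\mbf{Y}-\mbf{Y}^\trans\mbf{X}+\mbf{Y}^\trans\mbf{S}\mbf{Y}\geq 0$, which rearranges to \eqref{eq:young}. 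Your equality characterization $\mbf{Y}=\mbf{S}^{-1}\mbf{X}$ is also right, since $\mbf{M}^\trans\mbf{M}=\mbf{0}$ forces $\mbf{M}=\mbf{0}$ (its trace is the squared Frobenius norm of $\mbf{M}$). One minor simplification worth knowing: the square root, and hence any appeal to the spectral theorem, can be avoided entirely by instead expanding $(\mbf{X}-\mbf{S}\mbf{Y})^\trans\mbf{S}^{-1}(\mbf{X}-\mbf{S}\mbf{Y})\geq 0$, which produces exactly the same four terms using only the positive definiteness of $\mbf{S}^{-1}$; this variant also sidesteps the pairing issue you had to reason about, since there is only one way to insert the weight.
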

A special case of Lemma \ref{lemma:young} that will prove useful in the control synthesis presented in the next section considers $\mbf{S}=\mbf{I}$, $\mbf{X}=\mbf{H} \in \mathbb{R}^{n_x \times n_x}$ and $\mbf{Y}=\mbf{H}_0 \in \mathbb{R}^{n_x \times n_x}$, which leads to
\begin{align}
    \mbf{H}^\trans \mbf{H} &\geq  \mbf{H}^\trans \mbf{H}_0 + \mbf{H}^\trans_0 \mbf{H} - \mbf{H}^\trans_0 \mbf{H}_0. \label{eq:SpecialYoung}
\end{align}


\section{Control Synthesis}  
This section describes the synthesis of a static state-feedback controller $\mbf{K}$, which requires breaking the main synthesis problem into different sub-problems and iterating over these sub-problems to obtain a controller that is certified to render the closed-loop system \eqref{eq:closed-loop system} asymptotically stable within $\mathbb{X}_c$.
We will start by specifying structures of the sets $\mathbb{X}_c$ and $\mathbb{U}_c$ that will be used in the remainder of the letter. 
The local region $\mathbb{X}_c$  is taken to be a family of ellipsoids parameterized by $\mbf{W} >0$, i.e., $\mathbb{X}_c = \mathcal{E}_{n_x}(\mbf{W})$, where $\mbf{W}$ is chosen appropriately such that $\mathbb{X}_c \subseteq \mathbb{X}$. 
Similarly, we take $\mathbb{U}_c = r \mathcal{B}_{n_{u}}$, where $r>0$ is the parameter related to the control input magnitude and is chosen such that $\mathbb{U}_c \subseteq \mathbb{U}$. With these sets defined, the main synthesis problem is summarized in the next result.
\begin{theorem}
\label{theorem:revGrand}
Let $\mbf{W} >0$ and $r>0$ be chosen such that $\mathbb{X}_c = \mathcal{E}_{n_{x}}(\mbf{W}) \subseteq \mathbb{X}$ and $\mathbb{U}_c = r \mathcal{B}_{n_{u}} \subseteq \mathbb{U}$, respectively.
Suppose Assumption \ref{Assumption: Delta_i satisfies empirical bounds} holds with $\mathbb{D} = \mathbb{X}_c \times \mathbb{U}_c$. 
Then, the closed-loop system \eqref{eq:closed-loop system} is locally asymptotically stable in $\mathbb{X}_c$ for all $\Delta_i \in \mbs{\Delta}_{\mathbb{D}_{i}}, \ i \in \mathcal{N}_{n_w}$, if there exist $\mbf{P} > 0$, $\mbf{K} \in \mathbb{R}^{n_u \times n_x}$, $\tau >0$, and $\lambda_i >0, \ i \in \mathcal{N}_{n_w}$,
such that
\begin{align}
    \bbm 
        \mbf{P}(\mbf{A}+\mbf{B}_1 \mbf{K})+(\mbf{A}+\mbf{B}_1 \mbf{K})^\trans \mbf{P} & \mbf{PB}_2 & \mbs{\Theta} \\
        \mbf{B}_2^\trans \mbf{P} & \mbs{\Lambda} & \mbf{0} \\
        \mbs{\Theta}^\trans & \mbf{0} & \mbs{\Xi}
    \ebm
    &<0, \label{eq:revTheorem1}
    \\ 
    \bbm
        \tau^2 \mbf{I} & \mbf{K} \\ 
        \mbf{K}^\trans & \mbf{W}^{-1} \mbf{W}^{-1}
    \ebm &\geq 0, \label{eq:revTheorem2}
    \\
    \tau &\leq r, \label{eq:revTheorem3}
\end{align}
where $\mbs{\Lambda} = -\textnormal{diag} \left( \lambda_1, \ldots, \lambda_{n_{w}} \right)$, and 
\small{
\begin{equation} \label{eq:Xi and Theta}
\mbs{\Xi} = -\textnormal{diag}(\frac{\lambda_1}{\gamma_1^2} \mbf{I}, \dots,\frac{\lambda_{n_{w}}}{\gamma_{n_{w}}^2} \mbf{I}), \
\mbs{\Theta} = \bbm 
\lambda_1 \mbs{\Phi}_1,\dots, \lambda_n \mbs{\Phi}_{n_{w}} \ebm,
\end{equation}
}
\normalsize
with $\mbs{\Phi}_i = \mbf{C}_{i}^\trans + \mbf{K}^\trans \mbf{D}_{i}^\trans$, $i \in \mathcal{N}_{n_w}$.
\end{theorem}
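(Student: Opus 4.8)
The plan is to use the quadratic Lyapunov candidate $V(\delta\mbf{x}) = \delta\mbf{x}^\trans \mbf{P}\,\delta\mbf{x}$ (positive definite since $\mbf{P}>0$), combine $\dot V$ with the empirical norm bounds of Assumption~\ref{Assumption: Delta_i satisfies empirical bounds} through the S-procedure to obtain \eqref{eq:revTheorem1}, and handle the control-input containment separately through \eqref{eq:revTheorem2}--\eqref{eq:revTheorem3}. The two halves are then reconciled by an invariance argument on sublevel sets of $V$ inside $\mathbb{X}_c$.

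First I would differentiate $V$ along \eqref{eq:closed-loop system}, obtaining
\[
\dot V = \delta\mbf{x}^\trans\!\left[\mbf{P}(\mbf{A}+\mbf{B}_1\mbf{K}) + (\mbf{A}+\mbf{B}_1\mbf{K})^\trans\mbf{P}\right]\delta\mbf{x} + 2\,\delta\mbf{x}^\trans\mbf{P}\mbf{B}_2\mbf{w}.
\]
Using $\mbf{v}_i = \mbs{\Phi}_i^\trans\delta\mbf{x}$, each norm bound reads $\gamma_i^2\,\delta\mbf{x}^\trans\mbs{\Phi}_i\mbs{\Phi}_i^\trans\delta\mbf{x} - w_i^2 \ge 0$. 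Introducing multipliers $\lambda_i>0$ and forming $\dot V + \sum_i \lambda_i(\gamma_i^2\norm{\mbf{v}_i}^2 - w_i^2)$, I would collect this into a quadratic form in $\mbf{z}=[\delta\mbf{x}^\trans\ \mbf{w}^\trans]^\trans$ whose matrix is the $2\times 2$ block with $(1,1)$ entry $\mbf{P}(\mbf{A}+\mbf{B}_1\mbf{K})+(\mbf{A}+\mbf{B}_1\mbf{K})^\trans\mbf{P}+\sum_i\lambda_i\gamma_i^2\mbs{\Phi}_i\mbs{\Phi}_i^\trans$, off-diagonal entry $\mbf{P}\mbf{B}_2$, and $(2,2)$ entry $\mbs{\Lambda}$. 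The central step is to recognize \eqref{eq:revTheorem1} as the Schur complement of this block with respect to $\mbs{\Xi}$: since $\lambda_i,\gamma_i>0$ give $\mbs{\Xi}<0$, eliminating the third block row and column produces exactly $-\mbs{\Theta}\mbs{\Xi}^{-1}\mbs{\Theta}^\trans = \sum_i \lambda_i\gamma_i^2\mbs{\Phi}_i\mbs{\Phi}_i^\trans$ added to the $(1,1)$ entry. Hence \eqref{eq:revTheorem1} is equivalent to that $2\times2$ matrix being negative definite, which yields $\dot V + \sum_i\lambda_i(\gamma_i^2\norm{\mbf{v}_i}^2 - w_i^2)<0$ for all $\mbf{z}\neq\mbf{0}$; restricting to trajectories where the bounds hold then gives $\dot V<0$ for $\delta\mbf{x}\neq\mbf{0}$.

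Next I would establish the consistency requirement. Taking the Schur complement of \eqref{eq:revTheorem2} with respect to $\mbf{W}^{-1}\mbf{W}^{-1}$ gives $\mbf{K}\mbf{W}^2\mbf{K}^\trans \le \tau^2\mbf{I}$, i.e.\ $\bar{\sigma}(\mbf{K}\mbf{W})\le\tau$. For any $\delta\mbf{x}\in\mathbb{X}_c=\mathcal{E}_{n_x}(\mbf{W})$, writing $\delta\mbf{x}=\mbf{W}\mbf{y}$ with $\norm{\mbf{y}}\le1$ yields $\norm{\mbf{K}\delta\mbf{x}}=\norm{\mbf{K}\mbf{W}\mbf{y}}\le\bar{\sigma}(\mbf{K}\mbf{W})\le\tau\le r$ by \eqref{eq:revTheorem3}, so $\delta\mbf{u}=\mbf{K}\delta\mbf{x}\in r\mathcal{B}_{n_u}=\mathbb{U}_c$. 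Thus every state in $\mathbb{X}_c$ produces an input in $\mathbb{U}_c$, so $(\delta\mbf{x},\delta\mbf{u})\in\mathbb{D}=\mathbb{X}_c\times\mathbb{U}_c$ and the bounds of Assumption~\ref{Assumption: Delta_i satisfies empirical bounds} are valid throughout $\mathbb{X}_c$; this is precisely what makes the S-procedure step of the previous paragraph legitimate on $\mathbb{X}_c$.

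Finally I would combine the pieces: on $\mathbb{X}_c\setminus\{\mbf{0}\}$ the bounds hold, hence $\dot V<0$, and local asymptotic stability follows by restricting to sublevel sets $\{\delta\mbf{x}:V(\delta\mbf{x})\le c\}\subseteq\mathbb{X}_c$, which are invariant precisely because $\dot V<0$ on their boundaries while the containment above keeps the inputs admissible. The main obstacle I anticipate is this last reconciliation: the norm bounds are only certified on $\mathbb{D}$, so the invariance argument must guarantee that trajectories never leave the region where the bounds apply, and it is exactly the coupling of \eqref{eq:revTheorem2}--\eqref{eq:revTheorem3} to $\mbf{K}$ and $\mbf{W}$ that resolves the ``chicken-and-egg'' dependence by forcing $\mbf{K}\delta\mbf{x}\in\mathbb{U}_c$ over the entire certified region. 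Secondary care is needed in the Schur-complement bookkeeping of the second step, since the block dimensions of $\mbs{\Phi}_i$, $\mbs{\Theta}$, and $\mbs{\Xi}$ must line up for the identity $-\mbs{\Theta}\mbs{\Xi}^{-1}\mbs{\Theta}^\trans=\sum_i\lambda_i\gamma_i^2\mbs{\Phi}_i\mbs{\Phi}_i^\trans$ to hold.
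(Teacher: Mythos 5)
Your proposal is correct and follows essentially the same route as the paper's proof: the quadratic Lyapunov function $V=\delta\mbf{x}^\trans\mbf{P}\,\delta\mbf{x}$ combined with the norm-bound quadratic constraints via the S-procedure, with \eqref{eq:revTheorem1} recognized as the Schur complement (with respect to $\mbs{\Xi}<0$) of that S-procedure condition via the identity $-\mbs{\Theta}\mbs{\Xi}^{-1}\mbs{\Theta}^\trans=\sum_i\lambda_i\gamma_i^2\mbs{\Phi}_i\mbs{\Phi}_i^\trans$, and \eqref{eq:revTheorem2}--\eqref{eq:revTheorem3} giving $\bar{\sigma}(\mbf{K}\mbf{W})\leq\tau\leq r$ so that $\mbf{K}\delta\mbf{x}\in\mathbb{U}_c$ on $\mathbb{X}_c$, which legitimizes using the empirical bounds there. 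Your closing sublevel-set invariance argument is in fact a slightly more careful finish than the paper's, which stops at $\dot V<0$ on $\mathbb{X}_c$, but the substance of the two proofs is identical.
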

\begin{proof}
We establish the proof in three parts: first, we derive a condition that ensures asymptotic stability of the closed-loop system within the local region $\mathbb{X}_c$ under the assumption that $\delta \mbf{u}= \mbf{K} \delta \mbf{x} \in \mathbb{U}_c$ for all $\delta \mbf{x} \in \mathbb{X}_c$; then, we obtain the equivalent stability condition \eqref{eq:revTheorem1}; finally, we constrain the control signal such that $\delta \mbf{u}= \mbf{K} \delta \mbf{x} \in \mathbb{U}_c$ for all $\delta \mbf{x} \in \mathbb{X}_c$, which leads to \eqref{eq:revTheorem2} and \eqref{eq:revTheorem3}.
 
\textit{Part-1:}
Define the candidate Lyapunov function $V = \delta \mbf{x}^\trans \mbf{P} \delta \mbf{x}$ with $\mbf{P}  > 0$. Taking the time-derivative of $V$ and using \eqref{eq:closed-loop system} results in 
\begin{align*}
    \dot{V} &= 
    \delta \mbf{x}^\trans \mbf{P} \delta \dot{\mbf{x}}
    + \delta \dot{\mbf{x}}^\trans \mbf{P}\delta \mbf{x} \\
    &=
    \bbm
      \delta \mbf{x} \\ \mbf{w}
    \ebm^\trans 
    \bbm
        \mbf{P}(\mbf{A}+\mbf{B}_1 \mbf{K})+(\mbf{A}+\mbf{B}_1 \mbf{K})^\trans \mbf{P} & \mbf{PB}_{2}\\
        \mbf{B}_2^\trans \mbf{P} & \mbf{0} 
    \ebm
    \bbm \delta \mbf{x} \\ \mbf{w} \ebm. 
\end{align*}
The inputs and outputs of each $\Delta_i$ can be rewritten using \eqref{eq:closed-loop system} as
\begin{align}
    \bbm \mbf{v}_i \\ w_i \ebm 
    &= 
    \bbm
        (\mbf{C}_{i} + \mbf{D}_{i} \mbf{K}) & \mbf{0} \\
        \mbf{0} & \mbf{1}_{n_{w_{i}}}^\trans
    \ebm
    \bbm
        \delta \mbf{x} \\ \mbf{w}
    \ebm. 
    \label{eq:translation}
\end{align}
Now, suppose $\delta \mbf{u} = \mbf{K} \delta \mbf{x} \in \mathbb{U}_c$ holds for all $\delta \mbf{x} \in \mathbb{X}_c$. Then, under Assumption \ref{Assumption: Delta_i satisfies empirical bounds} with $\mathbb{D}= \mathbb{X}_c \times \mathbb{U}_c$, we have $w_i^2 \leq \gamma_i^2 \norm{\mbf{v}_i}^2$ for each $\Delta_i \in \mbs{\Delta}_{\mathbb{D}_{i}}$, $i \in \mathcal{N}_{n_w}$, and for all $\delta \mbf{x} \in \mathbb{X}_c$, which is equivalently given by
\begin{equation}
    \bbm \mbf{v}_i \\ {w}_i \ebm^\trans
    \bbm \gamma_i^2 \mbf{I} & \mbf{0} \\ \mbf{0} & -1 \ebm
    \bbm \mbf{v}_i \\ {w}_i \ebm
    \geq 0. 
    \label{eq:inputoutputQC}
\end{equation}
This, along with \eqref{eq:translation}, leads to a quadratic constraint (QC)
in $\delta \mbf{x}$ and $\mbf{w}$ for each $i \in \mathcal{N}_{n_w}$, expressed as
\begin{align}
    \bbm
      \delta \mbf{x} \\ \mbf{w}
    \ebm^\trans
    \bbm 
        \gamma_i^2 \mbs{\Phi}_i  \mbs{\Phi}_i^\trans & \mbf{0} \\
        \mbf{0} & -\mbf{1}_{n_{w_{i}}} \mbf{1}_{n_{w_{i}}}^\trans
    \ebm
    \bbm
      \delta \mbf{x} \\ \mbf{w}
    \ebm
    &\geq 0,
    \label{eq:normbound}
\end{align}
where $\mbs{\Phi}_i = \mbf{C}_{i}^\trans + \mbf{K}^\trans \mbf{D}_{i}^\trans$. In the current setting, all QCs of the form \eqref{eq:normbound} for $\Delta_i \in \mbs{\Delta}_{\mathbb{D}_{i}}$, $i \in \mathcal{N}_{n_w}$ hold for all $\delta \mbf{x} \in \mathbb{X}_c$.
This would imply, through the S-procedure, 
that $\dot{V} < 0$ for all $\delta \mbf{x} \in \mathbb{X}_c$ if there exists $\lambda_i \geq 0, \ i \in \mathcal{N}_{n_w}$ such that
\begin{equation}
\begin{split}
\bbm 
    \mbf{P}(\mbf{A}+\mbf{B}_1 \mbf{K})+(\mbf{A}+\mbf{B}_1 \mbf{K})^\trans \mbf{P} & \mbf{PB}_2 \\
    \mbf{B}_2^\trans \mbf{P} & \mbf{0}
    \ebm \\
    +
    \sum_{i=1}^{n_w} 
    \lambda_i 
    \bbm
        \gamma_i^2 \mbs{\Phi}_i  \mbs{\Phi}_i^\trans & \mbf{0} \\
        \mbf{0} & -\mbf{1}_{n_{w_{i}}} \mbf{1}_{n_{w_{i}}}^\trans 
    \ebm
    < 0 .\label{eq:stability_condition_1}
\end{split}
\end{equation}
This concludes the first part of the proof where we have derived a condition for local asymptotic stability.

\textit{Part-2:}
Making the restriction that $\lambda_i > 0$ for all $i \in \mathcal{N}_{n_w}$, allows for \eqref{eq:stability_condition_1} to be rewritten as
\begin{equation}  \label{eq:stability_norm_combined}
\begin{split}
\bbm
        \mbf{P}(\mbf{A}+\mbf{B}_1 \mbf{K})+(\mbf{A}+\mbf{B}_1 \mbf{K})^\trans \mbf{P} & \mbf{PB}_{2}\\
        \mbf{B}_2^\trans \mbf{P} & \mbs{\Lambda} 
    \ebm \\
    -
    \sum_{i=1}^{n_w} 
    \bbm 
        -\lambda_i \mbs{\Phi}_i 
        \frac{\gamma^2_i}{\lambda_i} 
        \mbs{\Phi}_i^\trans \lambda_i & \mbf{0} \\
        \mbf{0} & \mbf{0}
    \ebm <0,
\end{split}
\end{equation}
where $\mbs{\Lambda} = -\sum_{i=1}^{n_w} \lambda_i \mbf{1}_{n_{w_{i}}} \mbf{1}_{n_{w_{i}}}^\trans = - \textnormal{diag} \left( \lambda_1, \ldots, \lambda_{n_{w}} \right)$. Each matrix in the sum above can be expressed as
\begin{equation*}
\bbm 
        -\lambda_i \mbs{\Phi}_i 
        \frac{\gamma^2_i}{\lambda_i} 
        \mbs{\Phi}_i^\trans \lambda_i & \mbf{0} \\
        \mbf{0} & \mbf{0}
    \ebm = \bbm \lambda_i \mbs{\Phi}_i \\ \mbf{0} \ebm \left(-\frac{\gamma^2_i}{\lambda_i} \mbf{I}  \right) \bbm \lambda_i \mbs{\Phi}^\trans_i & \mbf{0} \ebm,
\end{equation*}
which leads to
\begin{equation}
\begin{split}
\sum_{i=1}^{n_w} 
    \bbm 
        -\lambda_i \mbs{\Phi}_i 
        \frac{\gamma^2_i}{\lambda_i} 
        \mbs{\Phi}_i^\trans \lambda_i & \mbf{0} \\
        \mbf{0} & \mbf{0}
    \ebm 
    = \bbm \mbs{\Theta} \\ \mbf{0} \ebm (\mbs{\Xi})^{-1} \bbm \mbs{\Theta} \\ \mbf{0} \ebm^\trans,
\end{split}
\end{equation} 
where $\mbs{\Xi}$ and $\mbs{\Theta}$ are as shown in \eqref{eq:Xi and Theta}.
Thus, \eqref{eq:stability_norm_combined} becomes
\begin{equation} \label{eq:stability_norm_combined_1}
\begin{split}
\bbm
        \mbf{P}(\mbf{A}+\mbf{B}_1 \mbf{K})+(\mbf{A}+\mbf{B}_1 \mbf{K})^\trans \mbf{P} & \mbf{PB}_{2}\\
        \mbf{B}_2^\trans \mbf{P} & \mbs{\Lambda} 
    \ebm \\
    - \bbm \mbs{\Theta} \\ \mbf{0} \ebm (\mbs{\Xi})^{-1} \bbm \mbs{\Theta} \\ \mbf{0} \ebm^\trans < 0.
\end{split}
\end{equation}
Applying the Schur complement to \eqref{eq:stability_norm_combined_1} leads to 
\eqref{eq:revTheorem1}, which completes the proof for the equivalent stability condition.

\textit{Part-3:}
To obtain a controller $\mbf{K}$ that satisfies the assumption used in \textit{Part-1} (i.e., $\delta \mbf{u} = \mbf{K} \delta \mbf{x} \in \mathbb{U}_c$ holds for all $\delta \mbf{x} \in \mathbb{X}_c$) and is, therefore, consistent with the values of $\gamma_i$ used, we need to ensure $\norm{\delta\mbf{u}} = \norm{\mbf{K} \delta \mbf{x}} \leq r$. 
A bound on $\norm{\delta \mbf{u}}$ is found using the definition of $\bar{\sigma}(\cdot)$ 
and knowing that $\norm{\mbf{W}^{-1} \delta \mbf{x}} \leq 1$, for all $\delta \mbf{x} \in \mathbb{X}_c$, which yields
\begin{equation}
\begin{split}
    \norm{\delta \mbf{u}} = \norm{\mbf{K} \delta \mbf{x}}
    = \norm{\mbf{KWW}^{-1} \delta \mbf{x}} \\
    \leq \bar{\sigma}(\mbf{KW}) \norm{\mbf{W}^{-1} \delta \mbf{x}} \leq \bar{\sigma}(\mbf{KW}). 
\end{split}
\end{equation} 
Thus, ensuring $\bar{\sigma}(\mbf{KW}) \leq r$ guarantees that $\norm{\delta \mbf{u}} \leq r$. We first find a $\tau > 0$ such that $\bar{\sigma}(\mbf{KW}) \leq \tau$. This condition can be expressed equivalently as $\tau^2 \mbf{I} ~\geq (\mbf{K} \mbf{W} \mbf{W} \mbf{K}^\text{T})$, applying the Schur complement to which leads to \eqref{eq:revTheorem2}. Finally, specifying $\tau \leq r$ means $\norm{\delta \mbf{u}} \leq \bar{\sigma}(\mbf{KW}) \leq \tau \leq r$. This completes the last part of the proof.
\end{proof}
\begin{remark}
Note that although we have considered the 2-norm bound in this letter, Theorem \ref{theorem:revGrand} is suitable for other quadratic characterizations of I/O behavior (of the form \eqref{eq:inputoutputQC}) of the nonlinearities (e.g., weighted 2-norm bounds). This would involve suitably modifying the stability condition \eqref{eq:revTheorem1}.
\end{remark}
\begin{remark}
Theorem \ref{theorem:revGrand} holds for any $\mathbb{X}_c = \mathcal{E}_{n_{x}}(\mbf{W}) \subseteq \mathbb{X}$ and $\mathbb{U}_c = r \mathcal{B}_{n_{u}} \subseteq \mathbb{U}$, with the values of $\gamma_i$ computed from the sampled data corresponding to the chosen region $\mathbb{D} = \mathbb{X}_c \times \mathbb{U}_c$. 
Hence, one could ideally select $\mbf{W}>0$ such that $\mathbb{X}_c$ is the largest ellipsoid contained in $\mathbb{X}$, choose a $r>0$, and find a feasible point satisfying all the constraints in Theorem~\ref{theorem:revGrand} to obtain a
controller which would asymptotically stabilize the closed-loop system \eqref{eq:closed-loop system} for all initial conditions within this ellipsoid.
However, it is not trivial to find such a feasible point for the constraints involved.
Specifically, the matrix inequality \eqref{eq:revTheorem1} is non-convex,
as it is bilinear in the variables $\mbf{P},\ \mbf{K}$. 
Also, $\mbs{\Theta}$ is quadratic in the variables $\lambda_i, \ \mbf{K}$. The bilinearity issue is well-known and can be addressed by applying a congruence transformation and a change of variables (see, e.g., \cite[p.~119]{CaverlyLMI}). However, the matrix inequality in~\eqref{eq:revTheorem2} then becomes non-convex in the transformed variables. We address these issues by reformulating and/or relaxing these constraints into convex ones. The convex constraints are then used to set up three different semi-definite programs (SDPs) for the controller synthesis. 
\end{remark}
We start by deriving an alternative form of \eqref{eq:revTheorem1} using a congruence transformation 
with $\text{diag}\left( \mbf{P}^{-1}, \mbf{I}, \mbf{I} \right)$ as
\begin{align}
        \bbm 
            \mbf{A} \mbf{R} + \mbf{B}_1 \mbf{F} + \mbf{R} \mbf{A}^\trans + \mbf{F}^\trans \mbf{B}_1^\trans & \mbf{B}_2
            & \tilde{\mbs{\Theta}} \\
            \mbf{B}_2^\trans & \mbs{\Lambda} & \mbf{0} \\
            \tilde{\mbs{\Theta}}^\trans & \mbf{0} & \mbs{\Xi}
        \ebm <0,
    \label{eq:step3}
\end{align}
where $\mbf{R} = \mbf{P}^{-1}$, $\mbf{F} = \mbf{K} \mbf{P}^{-1} = \mbf{K R}$,
and
$\tilde{\mbs{\Theta}} = 
\bbm 
    \lambda_1 \tilde{\mbs{\Phi}}_1,\dots, \lambda_{n_{w}} \tilde{\mbs{\Phi}}_{n_{w}} 
\ebm$ 
with $\tilde{\mbs{\Phi}}_i = \mbf{R} \mbf{C}_{i}^\trans + \mbf{F}^\trans \mbf{D}_{i}^\trans$. Note that \eqref{eq:step3} is not an LMI in the variables $\mbf{R}, \mbf{F}, \lambda_i$. However, if the values of $\lambda_i$s are known or given, \eqref{eq:step3} is an LMI in $\mbf{R}, \mbf{F}$. 
Another approach for deriving an LMI form of \eqref{eq:step3} is by setting $\lambda_i = \lambda$ and 
applying a congruence transformation on \eqref{eq:step3} with
$\text{diag} ( \sqrt{\lambda}\mbf{I}, (1/\sqrt{\lambda}) \mbf{I}, (1/\sqrt{\lambda}) \mbf{I} )$. These steps lead to
\begin{align}
\bbm 
    \mbf{A} \mbf{R} + \mbf{B}_1 \mbf{F} + 
    \mbf{R} \mbf{A}^\trans + \mbf{F}^\trans \mbf{B}_1^\trans 
    & \mbf{B}_2 & \bar{\mbs{\Theta}}
    \\ \mbf{B}_2^\trans & -\mbf{I} & \mbf{0} 
    \\ \bar{\mbs{\Theta}}^\trans & \mbf{0} & \bar{\mbs{\Xi}}
\ebm < 0,
\label{eq:step1}
\end{align}
where the $\lambda$ is absorbed into the definitions of $\mbf{R}$ and $\mbf{F}$ 
(i.e., $\mbf{R} = \lambda \mbf{R}$, $\mbf{F} = \mbf{K} \lambda \mbf{R}$) for consistent notation, 
$\bar{\mbs{\Xi}} = \text{diag}(-\frac{1}{\gamma_1^2} \mbf{I},\dots,-\frac{1}{\gamma_{n_{w}}^2} \mbf{I})$, and
$\bar{\mbs{\Theta}} = 
\bbm 
     \tilde{\mbs{\Phi}}_1, \dots, \tilde{\mbs{\Phi}}_{n_{w}} 
\ebm $.

Because of the change of variables introduced above, we need to suitably modify the constraint $\bar{\sigma}(\mbf{KW}) \leq \tau$. To this end, we state our first convex reformulation of \eqref{eq:revTheorem2} next. 
\begin{lemma} \label{lemma:step1_Ksize}
Let $\mbf{W} > 0$ be given and  $\mbf{K} = \mbf{F} \mbf{R}^{-1}$. Then, 
$\bar{\sigma}(\mbf{KW}) \leq \sqrt{\beta}$ if there exists $\beta > 0$ such that
    \begin{align}
    \bbm
        \beta \mbf{I} & \mbf{F} \\
        \mbf{F}^\trans & ( \mbf{W}^{-1} \mbf{R})^\trans + ( \mbf{W}^{-1} \mbf{R}) - \mbf{I}
    \ebm
    \geq 0.
    \label{eq:step1_Ksize}
\end{align}
\end{lemma}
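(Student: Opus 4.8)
The plan is to transform the matrix inequality \eqref{eq:step1_Ksize} into the equivalent scalar statement $\beta\mbf{I} - (\mbf{KW})(\mbf{KW})^\trans \geq 0$, which by the definition of $\bar{\sigma}(\cdot)$ is exactly $\bar{\sigma}(\mbf{KW}) \leq \sqrt{\beta}$. The bridge between the affine lower-right block in \eqref{eq:step1_Ksize} and the quadratic term $\mbf{KWWK}^\trans$ is the special Young's relation \eqref{eq:SpecialYoung}. First I would apply \eqref{eq:SpecialYoung} with $\mbf{H} = \mbf{W}^{-1}\mbf{R}$ and $\mbf{H}_0 = \mbf{I}$; since $\mbf{W} > 0$ makes $\mbf{W}^{-1}$ symmetric, this yields
\[
(\mbf{W}^{-1}\mbf{R})^\trans + (\mbf{W}^{-1}\mbf{R}) - \mbf{I} \leq \mbf{R}^\trans \mbf{W}^{-1}\mbf{W}^{-1}\mbf{R},
\]
so the $(2,2)$ block of \eqref{eq:step1_Ksize} is a lower bound for $\mbf{R}^\trans \mbf{W}^{-1}\mbf{W}^{-1}\mbf{R}$.

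Next I would invoke the fact that enlarging a diagonal block of a symmetric block matrix preserves positive semi-definiteness, so that the hypothesis \eqref{eq:step1_Ksize}$\,\geq 0$ implies
\[
\bbm \beta\mbf{I} & \mbf{F} \\ \mbf{F}^\trans & \mbf{R}^\trans \mbf{W}^{-1}\mbf{W}^{-1}\mbf{R} \ebm \geq 0.
\]
Then I would take a Schur complement about the lower-right block, which is positive definite because $\mbf{R}$ is invertible (recall $\mbf{R}=\mbf{P}^{-1}$, up to the scaling by $\lambda$). Using $(\mbf{R}^\trans \mbf{W}^{-1}\mbf{W}^{-1}\mbf{R})^{-1} = \mbf{R}^{-1}\mbf{W}\mbf{W}\mbf{R}^{-\trans}$ together with $\mbf{F} = \mbf{K}\mbf{R}$, the Schur term collapses to $\mbf{F}(\mbf{R}^\trans \mbf{W}^{-1}\mbf{W}^{-1}\mbf{R})^{-1}\mbf{F}^\trans = \mbf{K}\mbf{W}\mbf{W}\mbf{K}^\trans = (\mbf{KW})(\mbf{KW})^\trans$. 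This gives $\beta\mbf{I} - (\mbf{KW})(\mbf{KW})^\trans \geq 0$, i.e. $\bar{\sigma}(\mbf{KW})^2 \leq \beta$, establishing the claim.

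The hard part will be getting the direction of the Young bound right: I need the $(2,2)$ block of the hypothesized inequality to \emph{under}-estimate $\mbf{R}^\trans \mbf{W}^{-1}\mbf{W}^{-1}\mbf{R}$, so that substituting the quadratic term \emph{enlarges} the block and thereby preserves the $\geq 0$ property. This is precisely the role of \eqref{eq:SpecialYoung} — it supplies a one-sided affine under-estimate of the nonconvex quantity $\mbf{R}^\trans \mbf{W}^{-1}\mbf{W}^{-1}\mbf{R}$ and is what convexifies the constraint. A secondary point to verify is that $\mbf{R}$ is invertible so that both $\mbf{K}=\mbf{F}\mbf{R}^{-1}$ and the Schur complement step are well defined; this holds throughout the synthesis since $\mbf{R}$ is a (scaled) inverse of the positive definite matrix $\mbf{P}$.
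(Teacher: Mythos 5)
Your proof is correct and uses essentially the same argument as the paper: both hinge on the special Young's relation \eqref{eq:SpecialYoung} (equivalently, Lemma \ref{lemma:young} with $\mbf{S}=\mbf{X}=\mbf{I}$) applied to $\mbf{H}=\mbf{W}^{-1}\mbf{R}$ with $\mbf{H}_0=\mbf{I}$, so that the affine $(2,2)$ block in \eqref{eq:step1_Ksize} under-estimates $(\mbf{W}^{-1}\mbf{R})^\trans(\mbf{W}^{-1}\mbf{R})$ and hence \eqref{eq:step1_Ksize} implies the matrix inequality \eqref{eq:sigma_bar_ineq}. The only cosmetic difference is the final step: the paper recognizes \eqref{eq:sigma_bar_ineq} as the $\mathrm{diag}(\mbf{I},\mbf{R})$-congruence of \eqref{eq:revTheorem2}, already shown equivalent to $\bar{\sigma}(\mbf{KW})\leq\tau$ in Theorem \ref{theorem:revGrand}, whereas you take the Schur complement of that same matrix directly to reach $\beta\mbf{I}-(\mbf{KW})(\mbf{KW})^\trans\geq 0$ --- the two computations are interchangeable.
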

\begin{proof}
Performing a congruence transformation with $\text{diag}(\mbf{I}, \mbf{R})$ on \eqref{eq:revTheorem2} results in 
\begin{equation}
    \bbm
        \tau^2 \mbf{I} & \mbf{F} \\
        \mbf{F}^\trans &  ( \mbf{W}^{-1} \mbf{R})^\trans ( \mbf{W}^{-1} \mbf{R})
    \ebm \geq 0 . \label{eq:sigma_bar_ineq}
\end{equation}
Using Lemma \ref{lemma:young} with $\mbf{S}=\mbf{X}=\mbf{I}$ and $\mbf{Y}=\mbf{W}^{-1} \mbf{R}$, we relax the bilinear term $( \mbf{W}^{-1} \mbf{R})^\trans ( \mbf{W}^{-1} \mbf{R})$ as
\begin{align}
    ( \mbf{W}^{-1} \mbf{R})^\trans ( \mbf{W}^{-1} \mbf{R}) 
    &\geq ( \mbf{W}^{-1} \mbf{R})^\trans + ( \mbf{W}^{-1} \mbf{R}) - \mbf{I}.
\end{align}
Therefore, \eqref{eq:sigma_bar_ineq} is implied by \eqref{eq:step1_Ksize}, which ensures that $\bar{\sigma}(\mbf{KW}) \leq \tau$. Defining $\beta = \tau^2$ completes the proof. 
\end{proof}
Alternatively, we can find a different relaxation of the bilinear term $( \mbf{W}^{-1} \mbf{R})^\trans ( \mbf{W}^{-1} \mbf{R})$ for a given $\mbf{R}_0$. 
This is analogous to linearizing the bilinear term about $\mbf{R}_0$, which is similar to the convex overbounding approach in \cite{warner2017iterative}.
\begin{lemma}
\label{lemma:smallK_step3}
Let $\mbf{W} > 0$, $\mbf{R}_0 > 0$ be given and $\mbf{K} = \mbf{F} \mbf{R}^{-1}$. Then, 
$\bar{\sigma}(\mbf{KW}) \leq \sqrt{\beta}$ if there exists $\beta > 0$ such that
\begin{align}
    \bbm
        \beta \mbf{I} & \mbf{F} \\ 
        \mbf{F}^\trans & \mbf{T}_1
    \ebm 
    \geq 0,
    \label{eq:smallK_step3}
\end{align}
where 
$\mbf{T}_1 = (\mbf{W}^{-1} \mbf{R})^\trans (\mbf{W}^{-1} \mbf{R}_0) + (\mbf{W}^{-1} \mbf{R}_0)^\trans (\mbf{W}^{-1} \mbf{R}) - (\mbf{W}^{-1} \mbf{R}_0)^\trans (\mbf{W}^{-1} \mbf{R}_0)$.
\end{lemma}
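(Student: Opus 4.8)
The plan is to follow the same three moves used to prove Lemma~\ref{lemma:step1_Ksize}, changing only the particular instance of Young's relation that supplies the convex lower bound on the bilinear lower-right block. First I would start from the original size constraint \eqref{eq:revTheorem2} and apply a congruence transformation with $\text{diag}(\mbf{I},\mbf{R})$. Since $\mbf{R} = \mbf{P}^{-1}$ is nonsingular, congruence preserves positive semidefiniteness in both directions, so this step is an exact equivalence and reproduces \eqref{eq:sigma_bar_ineq}, namely
$$\bbm \tau^2 \mbf{I} & \mbf{F} \\ \mbf{F}^\trans & (\mbf{W}^{-1}\mbf{R})^\trans(\mbf{W}^{-1}\mbf{R}) \ebm \geq 0,$$
using $\mbf{F} = \mbf{K}\mbf{R}$ and the symmetry of $\mbf{W}^{-1}$. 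At this point the only source of non-convexity is the quadratic term $(\mbf{W}^{-1}\mbf{R})^\trans(\mbf{W}^{-1}\mbf{R})$ in the $(2,2)$ block.

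Next, instead of the choice $\mbf{X}=\mbf{Y}=\mbf{W}^{-1}\mbf{R}$ used in Lemma~\ref{lemma:step1_Ksize}, I would invoke the special case of Young's relation \eqref{eq:SpecialYoung} with the linearization point built in, taking $\mbf{H} = \mbf{W}^{-1}\mbf{R}$ and $\mbf{H}_0 = \mbf{W}^{-1}\mbf{R}_0$. This yields $(\mbf{W}^{-1}\mbf{R})^\trans(\mbf{W}^{-1}\mbf{R}) \geq \mbf{T}_1$, where $\mbf{T}_1$ is exactly the expression stated in the lemma and is affine in $\mbf{R}$ for fixed $\mbf{R}_0$. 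This is precisely the convex-overbounding/linearization about $\mbf{R}_0$ advertised before the statement.

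Finally I would close with a monotonicity observation. Setting $\beta = \tau^2$, suppose \eqref{eq:smallK_step3} holds. Because the $(2,2)$ block of \eqref{eq:sigma_bar_ineq} dominates $\mbf{T}_1$, I can write the matrix in \eqref{eq:sigma_bar_ineq} as the sum of the positive semidefinite matrix in \eqref{eq:smallK_step3} and the positive semidefinite block $\text{diag}\!\left(\mbf{0},\,(\mbf{W}^{-1}\mbf{R})^\trans(\mbf{W}^{-1}\mbf{R}) - \mbf{T}_1\right)$. Hence \eqref{eq:sigma_bar_ineq} holds, which (undoing the congruence) is \eqref{eq:revTheorem2}, and therefore $\bar{\sigma}(\mbf{KW}) \leq \tau = \sqrt{\beta}$.

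The only real thing to get right is the direction of the relaxation: unlike the equivalence in the first step, the Young-relation bound makes \eqref{eq:smallK_step3} merely \emph{sufficient} for \eqref{eq:sigma_bar_ineq}. The key is that we lower-bound the true $(2,2)$ block and then impose the positive semidefiniteness condition on the smaller block $\mbf{T}_1$, so that the larger, true block is automatically positive semidefinite. I expect this to be routine once \eqref{eq:SpecialYoung} is applied with $\mbf{H}_0 = \mbf{W}^{-1}\mbf{R}_0$; the payoff over Lemma~\ref{lemma:step1_Ksize} is that $\mbf{T}_1$ is affine in the decision variable $\mbf{R}$, so \eqref{eq:smallK_step3} is a genuine LMI in $(\beta, \mbf{F}, \mbf{R})$ rather than carrying $\mbf{R}$ inside a fixed quadratic.
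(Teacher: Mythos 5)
Your proposal is correct and takes essentially the same route as the paper's own proof: the congruence transformation with $\text{diag}(\mbf{I},\mbf{R})$ to reach \eqref{eq:sigma_bar_ineq}, the application of \eqref{eq:SpecialYoung} with $\mbf{H}=\mbf{W}^{-1}\mbf{R}$ and $\mbf{H}_0=\mbf{W}^{-1}\mbf{R}_0$, and the conclusion that \eqref{eq:smallK_step3} is sufficient for \eqref{eq:sigma_bar_ineq} and hence for $\bar{\sigma}(\mbf{KW}) \leq \tau = \sqrt{\beta}$. Your explicit positive-semidefinite-sum justification of the final implication is merely a more detailed writing of the paper's one-line ``is implied by'' step, so the two proofs coincide in substance.
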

\begin{proof}
We have already established that 
$\bar{\sigma}(\mbf{KW}) \leq \tau \iff \eqref{eq:sigma_bar_ineq}$.
Then, using \eqref{eq:SpecialYoung} with $\mbf{H}=\mbf{W}^{-1}\mbf{R}$ and $\mbf{H}_0=\mbf{W}^{-1}\mbf{R}_0$ leads to 
\begin{multline}
( \mbf{W}^{-1} \mbf{R})^\trans ( \mbf{W}^{-1} \mbf{R}) 
\geq (\mbf{W}^{-1} \mbf{R})^\trans (\mbf{W}^{-1} \mbf{R}_0) \\ + (\mbf{W}^{-1} \mbf{R}_0)^\trans (\mbf{W}^{-1} \mbf{R}) 
- (\mbf{W}^{-1} \mbf{R}_0)^\trans (\mbf{W}^{-1} \mbf{R}_0).
\end{multline}
Therefore, \eqref{eq:sigma_bar_ineq} is implied by \eqref{eq:smallK_step3}, which ensures that $\bar{\sigma}(\mbf{KW}) \leq \tau$. Denoting $\beta = \tau^2$ completes the proof.
\end{proof}
Finally, to express \eqref{eq:smallK_step3} in terms of the variables $\mbf{K}$ and $\mbf{P}$, instead of $\mbf{F}$ and $\mbf{R}$, we perform a congruence transformation with $\text{diag} \left( \mbf{I}, \mbf{P} \right)$ on \eqref{eq:smallK_step3} to obtain 
\begin{equation} \label{eq:intermediate_smallK_step2}
    \bbm
        \beta \mbf{I} & \mbf{K} \\ 
        \mbf{K}^\trans & \mbf{T}_2  
    \ebm 
    \geq 0 ,
\end{equation}
where $\mbf{T}_2 = \mbf{W}^{-1} \mbf{W}^{-1} \mbf{R}_0 \mbf{P} + \mbf{P} (\mbf{W}^{-1} \mbf{R}_0)^\trans \mbf{W}^{-1} - 
\mbf{P} (\mbf{W}^{-1} \mbf{R}_0)^\trans (\mbf{W}^{-1} \mbf{R}_0) \mbf{P}$. The Schur complement is then applied to \eqref{eq:intermediate_smallK_step2} to yield
\begin{align}
\bbm
        \beta \mbf{I} & \mbf{K} & \mbf{0}\\ 
        \mbf{K}^\trans & \mbf{T}_3 & \mbf{P} \\
        \mbf{0} & \mbf{P} & 
        \mbf{R}_0^{-1} \mbf{W} \mbf{W} \mbf{R}_0^{-1}
    \ebm 
    & \geq 0,
    \label{eq:smallK_step2}
\end{align}
where $\mbf{T}_3 = \mbf{W}^{-1} \mbf{W}^{-1} \mbf{R}_0 \mbf{P} + \mbf{P} (\mbf{W}^{-1} \mbf{R}_0)^\trans \mbf{W}^{-1}$.
%
Finally, we are ready to state the convex optimization problems (namely, SDPs) that are involved in the iterative controller synthesis. The SDPs, along with the respective solutions, are given by
\small{
\begin{align} 
(\beta^\star, \mbf{R}^\star, \mbf{F}^\star) &= \argmin_{\beta, \mbf{R}, \mbf{F}} \bigl\{ \beta~|~ \eqref{eq:step1}, \eqref{eq:step1_Ksize}, \mbf{R} > 0, \beta > 0 \bigr\}, \label{eq:SDP-1} \\
\left( \beta^\star, \mbf{P}^\star, \{ \lambda_i^\star \} \right) &=
    \argmin_{\beta, \mbf{P}, \{ \lambda_i \}} \bigl\{ \beta ~|~ \eqref{eq:revTheorem1}, \eqref{eq:smallK_step2},  \mbf{P} > 0, \beta > 0, \nonumber \\
&  \qquad \qquad \qquad \quad  \lambda_i > 0, i \in \mathcal{N}_{n_w} \bigr\},
\label{eq:SDP-2} \\
\left( \beta^\star, \mbf{R}^\star, \mbf{F}^\star \right) &=
\argmin_{\beta, \mbf{R}, \mbf{F}} \bigl\{ \beta ~| ~ \eqref{eq:step3}, \eqref{eq:smallK_step3}, \mbf{R} > 0, \beta > 0 \bigr\} .  \label{eq:SDP-3}
\end{align} }
\normalsize
\setlength{\textfloatsep}{3pt}
\begin{algorithm}[t!]
{Initialization}: Choose $\mbf{W}_0, \mbf{r}, n_{\textnormal{max}}$. \\
 \For{$j=1:n_r$ \do} {
Set $r = r_j$ where $r_j$ is the $j$-th entry of $\mbf{r}$ and $\mbf{W} = \mbf{W}_0$. 
Set $\mathbb{X}_c = \mathcal{E}_{n_x}(\mbf{W})$ and $\mathbb{U}_c = r \mathcal{B}_{n_{u}}$.
 \Repeat {\textnormal{The largest $\mathbb{X}_c= \mathcal{E}_{n_x}(\mbf{W})$ is certified}} {
Get the sampled data points corresponding to $\mathbb{X}_c \times \mathbb{U}_c$. Use those to compute $\gamma_i, i \in \mathcal{N}_{n_w}$. \\
  Get $\beta^\star, \mbf{R}^\star, \mbf{F}^\star$ from \eqref{eq:SDP-1}. Set $\mbf{K} = \mbf{F}^\star (\mbf{R}^\star)^{-1}$, $\mbf{R}_0 = \mbf{R}^\star$, $c_t = 1$. \\
\While{$c_t \leq n_{\textnormal{max}}~\&~\bar{\sigma} (\mbf{K} \mbf{W}) \geq r$} {
       Get $\beta^\star, \mbf{P}^\star, \{ \lambda_i^\star \}$ from \eqref{eq:SDP-2} using $\mbf{R}_0$, $\mbf{K}$. Set $\mbf{R}_0 = (\mbf{P}^\star)^{-1}$ . \\
       Get $\beta^\star, \mbf{R}^\star, \mbf{F}^\star$ from \eqref{eq:SDP-3} using $\mbf{R}_0$, $\{ \lambda_i^\star \}$.  \\ 
    Set $\mbf{K} = \mbf{F}^\star (\mbf{R}^\star)^{-1}$, $\mbf{R}_0 = \mbf{R}^\star$, $c_t = c_t + 1$.
     }
     \eIf{$\bar{\sigma} (\mbf{K} \mbf{W}) \leq r$} {
         Update $\mbf{W}$ to get a larger $\mathbb{X}_c = \mathcal{E}_{n_x}(\mbf{W}) \subseteq \mathbb{X}$.
         }{
        Update $\mbf{W}$ to get a smaller $\mathbb{X}_c \hspace{-0.1cm} =  \mathcal{E}_{n_x}(\mbf{W}) \subseteq \mathbb{X}$.
        } 
        }
Output: $\mbf{K}$ and $\mbf{W}$.
 }
 \caption{Controller Synthesis Algorithm}
    \label{algorithm}
\end{algorithm}
Note that 
the SDP in \eqref{eq:SDP-2} requires known values of $\mbf{K}$ and $\mbf{R}_0$, similar to 
the SDP in \eqref{eq:SDP-3} which requires
the values of $\lambda_i$ and $\mbf{R}_0$ to be known.
Thus, 
the SDP in \eqref{eq:SDP-1} can be initially solved to obtain $\beta^\star, \mbf{R}^\star, \mbf{F}^\star$ and set $\mbf{K} = \mbf{F}^\star (\mbf{R}^\star)^{-1}$, $\mbf{R}_0 = \mbf{R}^\star$. The values of $\mbf{K}$ and $\mbf{R}_0$ can then be utilized in solving 
the SDP in \eqref{eq:SDP-2} to get $\beta^\star, \mbf{P}^\star, \{ \lambda_i^\star \}$.
Now, the tuple $(\mbf{P}, \mbf{K}, \tau, \{ \lambda_i \})$ with $\mbf{P} = \mbf{P}^\star$, $\tau = \sqrt{\beta^\star}$, $\{ \lambda_i \} = \{ \lambda_i^\star \}$  satisfies 
\eqref{eq:revTheorem1}, \eqref{eq:revTheorem2}. However, \eqref{eq:revTheorem3} might not hold and our approach involves iterating between 
\eqref{eq:SDP-2} and \eqref{eq:SDP-3} to satisfy $\bar{\sigma}(\mbf{KW}) \leq r$, if that is possible without modifying $\mbf{W}$.
The control synthesis starts from a small ellipsoid $\mathbb{X}_c = \mathcal{E}_{n_x}(\mbf{W})$ for a given $r>0$. Iterations are then carried out to certify the largest possible ellipsoid $\mathbb{X}_c = \mathcal{E}_{n_x}(\mbf{W})$ for that $r$, while simultaneously satisfying $\bar{\sigma}(\mbf{KW}) \leq r$. 
The overall procedure for control synthesis is summarized in Algorithm~\ref{algorithm}. Given the sampling region $\mathbb{X} \times \mathbb{U}$, Algorithm~\ref{algorithm} should be initialized by choosing a $\mbf{W} = \mbf{W}_0 > 0$ such that $\mathbb{X}_c = \mathcal{E}_{n_x}(\mbf{W}) \subseteq \mathbb{X}$ is sufficiently small. Also as a part of the initialization, $r_m = \max_{r>0} \{r~|~\mathbb{U}_c = r \mathcal{B}_{n_{u}} \subseteq \mathbb{U} \}$ should be determined to specify an $n_r$-point grid $\mbf{r} = (r_0, \ldots, r_m)$ where $r_0 > 0$ is chosen to be small. Finally, a maximum iteration number $n_{\text{max}}$ should be chosen when implementing Algorithm~\ref{algorithm}. 
Note that a variation of the Algorithm~\ref{algorithm} can be obtained where only \eqref{eq:SDP-1} is utilized (i.e., without the iteration between \eqref{eq:SDP-2} and \eqref{eq:SDP-3}). However, in our experience, this generally leads to more conservative results.
%
\section{Numerical Examples}
Two numerical examples are included in this section, concerning two-dimensional single-input systems. The nonlinearity explicitly depends on control inputs in the first example. The second example considers a system where the nonlinearity is a function of the states only. We choose
$\mbf{W}=\alpha \mbf{I}$, $\alpha>0$, meaning $\mathbb{X}_c  = \mathcal{E}_2 (\mbf{W})$ is a circle of radius $\alpha$ in these examples. Also, the SDPs in \eqref{eq:SDP-1}-\eqref{eq:SDP-3} are solved in MATLAB using \texttt{YALMIP} \cite{lofberg2004yalmip} and \texttt{MOSEK} \cite{mosek}. 
%
\begin{figure}[t!]
\centering \vspace{9pt}
\includegraphics[width=0.37\textwidth]{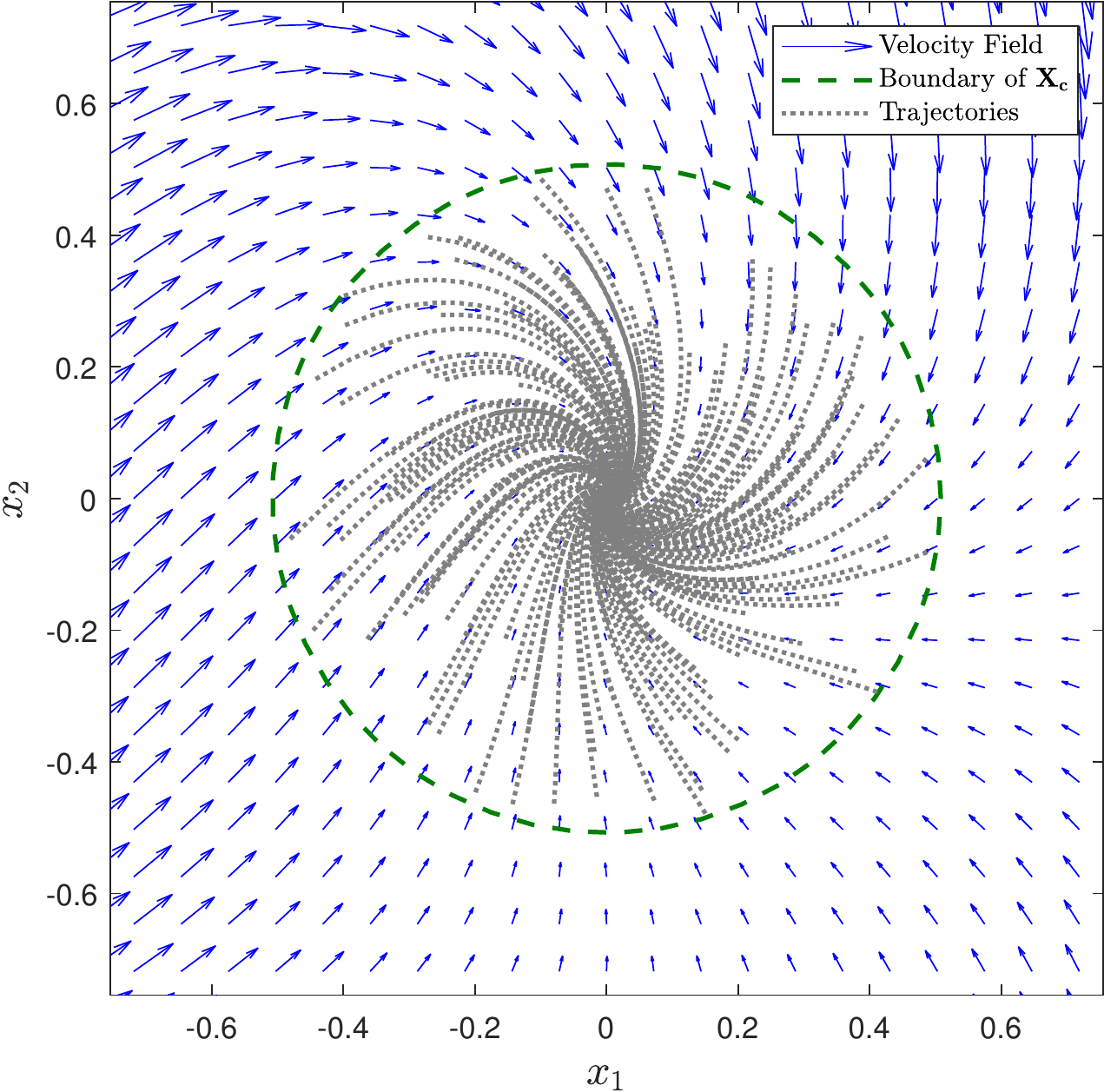}
\caption{Simulation results of the closed-loop system \eqref{eq:closed_loop_system_example_1} 
with $r=0.5$ and $\mbf{K}=\bbm -0.7151  & -0.6762 \ebm$.
} \label{fig:LMI_controller_1}
\end{figure}

\textit{Example-1:}
Consider a nonlinear system of the form 
\begin{equation} \label{eq:nonliner_system_example_1}
\dot{\mbf{x}} = \begin{bmatrix}
\dot{x}_1 \\ \dot{x}_2
\end{bmatrix} = \begin{bmatrix}
-0.1 x_1 + x_2 + u - x_1 x_2 + u^2 \\
-0.1 x_2 + u + x_1^2 - u^2
\end{bmatrix},
\end{equation}
with the corresponding 
equilibrium $(\mbf{x}_0, u_0) = (\mbf{0}, 0)$.
The nonlinear system in \eqref{eq:nonliner_system_example_1} is cast in the form of \eqref{eq:LTI system plus uncertainty} with  
\begin{equation*}
\begin{split}
    \mbf{A} &= \bbm -0.1 & 1 \\ 0 & -0.1\ebm , \ \mbf{B}_1 = \bbm 1 \\ 1\ebm, \\
    \mbs{\Delta}(\delta \mbf{x}, \delta u) &= \bbm -\delta x_1 \delta x_2 + \delta u^2 \\ \delta x_1^2 - \delta u^2 \ebm = \begin{bmatrix}
    \Delta_1 (\delta \mbf{x}, \delta {u}) \\ \Delta_2 (\delta \mbf{x}, \delta {u})
    \end{bmatrix}.
\end{split}
\end{equation*}
The system is then expressed in closed-loop form 
as
\begin{align} \label{eq:closed_loop_system_example_1}
    \delta \dot{\mbf{x}} &= (\mbf{A} + \mbf{B}_1 \mbf{K}) \delta \mbf{x} + \mbf{w}, \ \mbf{w} = [ \Delta_1 \ \Delta_2 ]^\trans, \\
    \mbf{v}_1 &= \left( \bbm \mbf{1}_{3_{1}} & \mbf{1}_{3_{2}} \ebm + \mbf{1}_{3_{3}} \mbf{K} \right) \delta \mbf{x},
    \mbf{v}_2 = \left( \bbm \mbf{1}_{3_{1}} & \mbf{0} \ebm + \mbf{1}_{3_{3}} \mbf{K} \right) \delta \mbf{x}. \nonumber
\end{align}
Now, Algorithm \ref{algorithm} is implemented with 11 values of $r$ between 0.01 and 0.5. %
The largest radius certified is $\alpha = 0.508$, which corresponds to $r=0.5$ and $\mbf{K}=\bbm -0.7151  & -0.6762 \ebm$. The simulation results of the closed-loop system with this controller are shown in the form of a phase portrait plot in Fig. \ref{fig:LMI_controller_1}. 
Closed-loop trajectories starting from different initial conditions in the set $\mathbb{X}_c$ 
converge to the origin, illustrating asymptotic convergence in the certified region. The velocity field indicates that 
the largest set $\mathbb{X}_c$ certified is contained within an even larger asymptotically stable region. This is likely due to the local norm bounds holding true for this larger region.
In summary, this example demonstrates that the proposed method is able to certify local asymptotic stability of the nonlinear system with state feedback, using only sampling and no explicit knowledge of the system's nonlinearities.
\begin{figure}[t!]
\centering \vspace{9pt}
\includegraphics[width=0.37\textwidth]{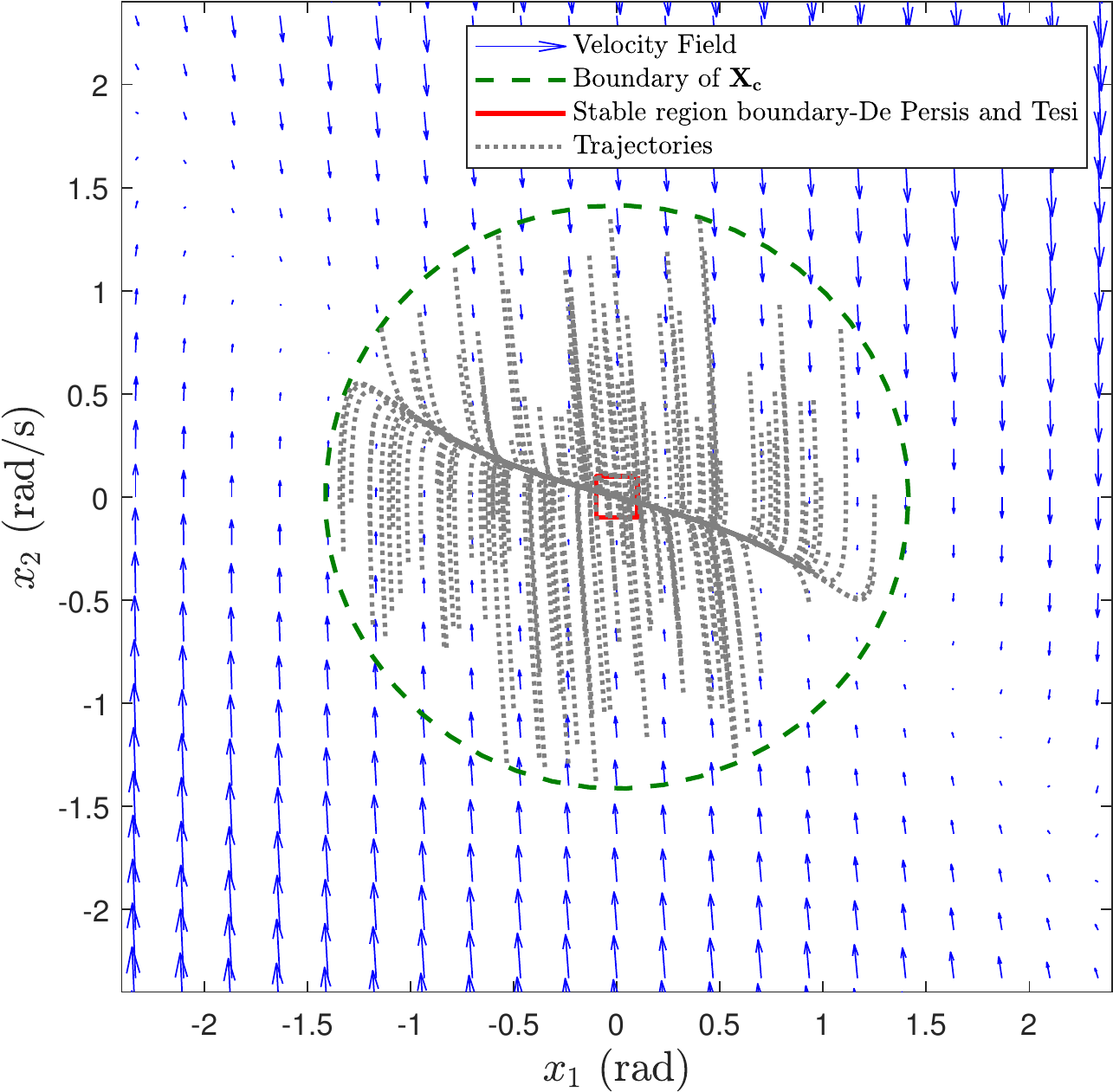}
\caption{Simulation results of the closed-loop system \eqref{eq:closed_loop_system_example_2} with 
$\mbf{K}=\bbm -13.4283 & -13.5242 \ebm$. } \label{fig:LMI_controller_2}
\end{figure}

\textit{Example-2:}
Consider the inverted pendulum example in \cite[Section V.B]{dePersis2020} in continuous time with the same unstable equilibrium at $(\mbf{x}_0,u_0)=(\mbf{0},0)$, which corresponds to the pendulum in upright position. The governing system is expressed in the
closed-loop form as
\begin{equation} \label{eq:closed_loop_system_example_2}
\resizebox{.89\hsize}{!}{$    \delta \dot{\mbf{x}} 
    = 
   \left( \bbm 0 & 1 \\ g & -\mu \ebm 
    +
    \mbf{1}_{2_{2}} \mbf{K} \right) \delta \mbf{x}
    +
    \bbm 0 \\
        \frac{g}{l} ( \sin{(\delta x_1)} - \delta x_1) 
    \ebm,$}
\end{equation}
\normalsize
where 
$l=1$, $g=9.8$, and $\mu=0.01$ (as in \cite{dePersis2020}). In this example, $\mbf{B}_2 = \mbf{1}_{2_{2}}$, $\mbf{C}_1=\bbm \mbf{1}_{3_{1}} & \mbf{0}\ebm$ and $\mbf{D}_1 = \mbf{0}$. Since the nonlinearity is independent of $\delta u$, we do not necessarily need to constrain the control input. However, performing the iterations in Algorithm \ref{algorithm} to reduce $\bar{\sigma}(\mbf{KW})$ in turn reduces the control effort required, and we let these iterations continue for $n_\text{max}=20$.  
In this setup, the maximum certified radius is $\alpha = \sqrt{2}$, along with the
controller $\mbf{K}=\bbm -13.4283 & -13.5242 \ebm$. This controller is therefore able to drive the pendulum to its upright position from an initial displacement of approximately 81 degrees. The simulation results of the system \eqref{eq:closed_loop_system_example_2} with this controller are depicted in Fig. \ref{fig:LMI_controller_2} where, similar to Example-1, the vector field indicates that the closed-loop system can be driven to the equilibrium from a much larger region than the certified region $\mathbb{X}_c$. Indeed, there appears to be a stable manifold with the vector field converging to it (see Fig.~\ref{fig:LMI_controller_2}).
Also in Fig. \ref{fig:LMI_controller_2}, the red square denotes the local region certified in \cite{dePersis2020}. In comparison, the proposed controller is able to certify a much larger region. 
This improvement was achieved, in part, by utilizing the knowledge of the nominal LTI system whereas, the controller in \cite{dePersis2020} is purely data-driven. This example thus demonstrates the efficacy of the proposed method over a purely data-driven framework, given the nominal LTI system is known.


\section{Conclusions and Future Work}
We presented an iterative method of local stabilization for nonlinear systems using sampled I/O data.
Our approach uses I/O data to derive local norm bounds and synthesizes a robust state-feedback controller that is guaranteed to stabilize the system within the sampling region for the set of nonlinearities satisfying the norm bounds.
The iterative steps require solving SDPs which can be done efficiently using freely available solvers. One of the numerical examples highlighted the reduced conservatism in our proposed synthesis method compared to a purely data-driven approach.
Our future efforts will involve introducing parametric uncertainties and exogenous signals into the proposed framework. Also, we will investigate other ways to characterize I/O behavior of the nonlinearities (e.g., weighted 2-norm bounds) and extend our formulation to the output-feedback case.

\bibliographystyle{IEEEtran}
\bibliography{Refs}

\end{document}